\documentclass{article}
\usepackage{amsmath,amssymb,amsthm}
\usepackage[colorlinks,citecolor=blue]{hyperref}
\usepackage[utf8]{inputenc}
\usepackage{tikz}
\usepackage[english]{babel}



\newtheorem{theorem}{Theorem}[section] 
\newtheorem{proposition}[theorem]{Proposition}

\newtheorem{lemma}[theorem]{Lemma}

\theoremstyle{definition}

\newtheorem{remark}[theorem]{Remark}



\newcommand{\ch}{\operatorname{Ch}}
\newcommand{\rk}{\operatorname{rk}}
\newcommand{\bhz}{\beta}
\newcommand{\sha}{\sigma}
\newcommand{\Perm}{\textnormal{Perm}}

\title{Chains in shard lattices and BHZ posets}

\author{Pierre Baumann\footnote{Cet auteur a b\'en\'efici\'e d'une aide de l'Agence Nationale de la Recherche (projets Vargen et GeoLie, r\'ef\'erences ANR-13-BS01-0001-01 et ANR-15-CE40-0012).}, Fr\'ed\'eric Chapoton\footnote{Cet auteur a b\'en\'efici\'e d'une aide de l'Agence Nationale de la Recherche (projet Carma, r\'ef\'erence ANR-12-BS01-0017).},\\ Christophe Hohlweg\footnote{Supported by NSERC Discovery grant \emph{Coxeter groups and related structures}.} \& Hugh Thomas\footnote{Supported by an NSERC Discovery grant and the Canada Research Chairs program.}~}

\date{\today}


\begin{document}

\maketitle


\begin{abstract}
  For every finite Coxeter group $W$, we prove that the number of
  chains in the shard intersection lattice introduced by Reading on the one hand and in the BHZ
  poset introduced by Bergeron, Zabrocki and the third author  on the other hand, are the
  same. We also show that these two partial orders are related by an
  equality between generating series for their M\"obius numbers, and provide a dimension-preserving bijection between the order complex on the BHZ poset and the pulling triangulation of the permutahedron arising from the right weak order,
  analogous to the bijection defined by Reading between the order complex of the shard poset and
  the same triangulation of the permutahedron.
\end{abstract}

\section{Introduction}

To study finite Coxeter groups, it has appeared to be very useful to
introduce several partial orders on their elements. The most usual
partial orders are the Bruhat order and the weak order (or
permutahedron order). They are now very classical, and have been used
in plenty of works.

Introduced more recently in \cite{shard_court, shard_long}, the shard
intersection order (or shard order for short) has proved itself a
convenient tool to study other aspects of Coxeter groups, in
particular in relation to noncrossing partitions, $c$-sortable
elements and cluster combinatorics.

Another partial order has been introduced in \cite{bhz}, that will be
called the BHZ partial order on the finite
Coxeter group $W$. It has the distinction of not being a lattice, having several
maximal elements.
This partial order is closely related in type $A$ to
the structure of some Hopf algebras on permutations; see~\cite{bhz} for more details.

Our aim in this note is to explain an unexpected connection between
the shard order and the BHZ order. The main result is the following one.

\begin{theorem}
  \label{main}
  Let $W$ be a finite Coxeter group. For every subset $J$ of simple
  reflections and for every integer $k \geq 1$, the number of
  $k$-chains whose maximal element has support $J$ is the same for the
  shard order on $W$ and for the BHZ order on $W$.
\end{theorem}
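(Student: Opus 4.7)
The plan is to identify both chain-counting problems with the face-counting problem for a single simplicial complex: the pulling triangulation $\Delta$ of the permutahedron $\Perm(W)$ with respect to the right weak order. Reading already established that the order complex of the shard intersection lattice is isomorphic to $\Delta$, and moreover under this identification the support of the top of a chain records which face of $\Perm(W)$ contains the corresponding simplex. So it suffices to construct a dimension-preserving bijection between the order complex of the BHZ poset and $\Delta$, and to verify that it is compatible with the same ``support vs.\ face type'' correspondence; Theorem~\ref{main} would then follow by comparing the two identifications.

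On the BHZ side, I would first unpack the cover relations in terms of descent data: an element $w\in W$ is encoded by its descent set $J$ together with combinatorial information (e.g.\ parabolic coset representatives) controlling how $w$ sits inside its descent class. Given a BHZ chain $w_1 < \cdots < w_k$, I would assemble from these encodings a flag of faces of $\Perm(W)$, each face contained in the previous one, and then use the pulling rule relative to the right weak order to produce a canonical simplex $\Phi(w_1 < \cdots < w_k)$ of $\Delta$ of dimension $k-1$. Conversely, an arbitrary simplex of $\Delta$ determines a maximal chain in the weak order within some face of the permutahedron, and one should be able to invert the recipe by reading off descent data along this chain.

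Once $\Phi$ is set up, the proof reduces to three checks: (i) $\Phi$ is a dimension-preserving bijection from BHZ chains to simplices of $\Delta$; (ii) the support of the top element $w_k$ of a BHZ chain equals the type $J$ of the smallest face of $\Perm(W)$ containing $\Phi(w_1 < \cdots < w_k)$; and (iii) Reading's bijection satisfies the analogue of (ii) for the shard order. Summing over simplices of $\Delta$ of dimension $k-1$ lying in the $W_J$-face yields the common count asserted in the theorem.

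The main obstacle is constructing $\Phi$ and proving (i). Unlike the shard intersection lattice, the BHZ poset is not a lattice, has several maximal elements, and does not come with a recursive structure on intervals analogous to Reading's ``lower interval is a parabolic shard lattice.'' I therefore expect the definition of $\Phi$ to be chain-by-chain, via an explicit parabolic decomposition, rather than inductive, and the bijectivity to require a careful matching between BHZ covers and incidences of faces in the pulling triangulation. Establishing (ii) should then be a direct consequence of how the descent flag encodes the support of $w_k$ as the face-type of the pulling simplex, and (iii) is already available in the literature.
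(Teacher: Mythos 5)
Your overall strategy---route both chain counts through the pulling triangulation $\Delta(W)$ of the permutahedron with respect to right weak order---is the paper's second proof of Proposition~\ref{weak-main}, and the bijection $\varphi$ of Section~\ref{s:triangle} realizes the map $\Phi$ you sketch. But your step (ii), the claim that the support of the top element of a BHZ chain equals the type of the smallest face of $\Perm(W)$ containing its image simplex, is false, and in fact no bijection can have this property together with (iii): the two statistics have different distributions. Take $k=1$. Every $0$-simplex of $\Delta(W)$ is a vertex of $\Perm(W)$, so its smallest containing face is itself, of type $\emptyset$; yet the $1$-chains of either poset are the elements of $W$, whose supports range over all subsets of $S$. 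One can also see the failure directly in the paper's $\varphi$: for a chain not containing $e$, say $x_1<_\bhz\cdots<_\bhz x_k$, the image simplex lies in a smallest face of type $S(x_{k-1})$ (by Lemma~\ref{lem:2} applied to the augmented chain), not $S(x_k)$. For the same reason your step (iii) is not something Reading's bijection provides. So the triangulation argument only yields the unrefined count of Proposition~\ref{weak-main}, and your proposal is missing the idea needed to upgrade it to Theorem~\ref{main}.

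The paper closes exactly this gap by inclusion--exclusion over standard parabolic subgroups: for either order, the $k$-chains of $W$ whose top element has support contained in $J$ are precisely the $k$-chains of $(W_J,\leq)$, so
\begin{equation*}
\#\ch_{J,k}=\sum_{K\subseteq J}(-1)^{\#J-\#K}\,\#\ch_k(W_K),
\end{equation*}
and applying Proposition~\ref{weak-main} to each $W_K$ gives Theorem~\ref{main}. If you add this reduction, your plan becomes the paper's second proof. Note also that the paper's primary, self-contained proof is entirely different: it sets up the recursion $\#\ch_{J,k}=\sum_{I\subseteq J}\#E(I,J)\,\#\ch_{I,k-1}$ in both posets and matches the coefficients via the explicit bijection $w\mapsto w^I$ between $E_{\sha}(I,J)$ and $E_{\bhz}(I,J)$ (Lemma~\ref{lemme_central}); that route gives the support-refined statement directly without any geometry.
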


We give two different proofs of the main theorem.  
The first is self-contained, and
follows from a common recursion obtained in section \ref{s:chains}. 
In this statement and from now on, a $k$-chain in a poset means a
weakly increasing sequence of $k$ elements
\begin{equation*}
  u_1 \leq u_2 \leq \dots \leq u_k.
\end{equation*}
A sequence $u_1 < u_2 < \dots < u_k$ is called a strict $k$-chain.

There is a general relationship in partial orders between chains and
strict chains (see for example \cite[\S 3.11]{EC1}). The
correspondence is defined by seeing a chain as a pair (strict chain,
sequence of multiplicities). This allows one to relate the enumeration
of all strict chains and the enumeration of all
chains.

In our context, one can restrict this correspondence to chains whose
last element has a given support. This implies that the analogue of
Theorem \ref{main} for strict chains also holds. This has the following consequence, by summing over all subsets $J$.

\begin{theorem}
  \label{f_vector}
  Let $W$ be a finite Coxeter group. The $f$-vectors of the order
  complexes for the shard order on $W$ and for the BHZ order on $W$ are
  equal.
\end{theorem}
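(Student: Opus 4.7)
The plan is to deduce Theorem~\ref{f_vector} from Theorem~\ref{main} via the standard correspondence between weak and strict chains, exploiting the fact that this correspondence preserves the maximal element (and hence its support).

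Fix a subset $J$ of simple reflections, and for a given poset let $c_k(J)$ and $s_\ell(J)$ denote respectively the number of weak $k$-chains and strict $\ell$-chains whose largest element has support $J$. Any weak $k$-chain $u_1 \leq u_2 \leq \dots \leq u_k$ factors uniquely as an underlying strict chain $v_1 < v_2 < \dots < v_\ell$ (obtained by removing repetitions) together with a composition $(m_1, \dots, m_\ell)$ of $k$ with positive parts recording the multiplicity of each $v_i$. Since $v_\ell = u_k$, the support of the maximal element is preserved by this decomposition. Summing over the $\binom{k-1}{\ell-1}$ admissible compositions gives, for both the shard order and the BHZ order,
\begin{equation*}
  c_k(J) \;=\; \sum_{\ell=1}^{k} \binom{k-1}{\ell-1}\, s_\ell(J).
\end{equation*}

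By Theorem~\ref{main}, the left-hand side agrees for the two orders for every $k \geq 1$ and every $J$. The matrix $\bigl(\binom{k-1}{\ell-1}\bigr)_{k,\ell \geq 1}$ is unitriangular, hence invertible over $\ZZ$; solving for $s_\ell(J)$ shows that the numbers of strict $\ell$-chains with a given top support also coincide for both orders. The $(\ell-1)$-dimensional faces of the order complex of a poset are precisely its strict $\ell$-chains, so the corresponding component of the $f$-vector is $\sum_J s_\ell(J)$. Summing the previous identity over all $J$ yields the desired equality of $f$-vectors.

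No substantial obstacle is expected here: once the decomposition of a weak chain as (strict chain, composition of multiplicities) is recognized to respect the support of the maximum, the statement reduces to a formal triangular inversion, exactly of the type recalled in \cite[\S 3.11]{EC1}. The only step requiring explicit verification is that this decomposition is a bijection on each support class $J$, which is immediate from the construction.
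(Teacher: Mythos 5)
Your argument is correct and is essentially the paper's own: the authors likewise invoke the chain/strict-chain correspondence of \cite[\S 3.11]{EC1}, observe that it can be restricted to chains whose top element has a fixed support $J$ so that Theorem~\ref{main} transfers to strict chains, and then sum over $J$. Your write-up merely makes the triangular system $c_k(J)=\sum_{\ell}\binom{k-1}{\ell-1}s_\ell(J)$ and its inversion explicit, which is a faithful elaboration of the same proof.
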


The starting point of this article was in fact a previous observation,
relating some generating series of M\"obius numbers for shard orders and
for BHZ order.  We prove a refined version of such a statement in
Section~\ref{s:charpoly}.

An alternative approach to the main theorem is provided in Section~\ref{s:triangle}.  There, we give a dimension-preserving bijection between the order complex
of the BHZ poset and a certain pulling triangulation of the permutahedron.
Reading, in~\cite[Theorem 1.5]{shard_long}, gives a dimension-preserving
bijection between the order complex of the shard lattice and an isomorphic
triangulation of the permutahedron.  
This establishes the following proposition:

\begin{proposition}\label{weak-main}
  Let $W$ be a finite Coxeter group. For every integer $k \geq 1$, the number of
  $k$-chains is the same for the
  shard order on $W$ and for the BHZ order on $W$.
\end{proposition}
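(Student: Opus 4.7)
The plan is to deduce the proposition by composing the two dimension-preserving bijections alluded to in the preceding paragraph. Recall that for any finite poset $P$, the order complex $\Delta(P)$ has the strict $k$-chains of $P$ as its $(k-1)$-dimensional faces, so its $f$-vector records the numbers of strict chains of each length. The bijection to be constructed in Section~\ref{s:triangle} identifies the faces of the order complex of the BHZ poset with the simplices of a certain pulling triangulation $T$ of the $W$-permutahedron in a dimension-preserving way; Reading's bijection of \cite[Theorem~1.5]{shard_long} identifies the faces of the order complex of the shard lattice with the simplices of a triangulation isomorphic to $T$, again preserving dimension. Composing yields a dimension-preserving bijection between the two order complexes, so their $f$-vectors coincide and for every $k \geq 1$ the two posets carry the same number of strict $k$-chains.

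To pass from strict chains to the weakly increasing $k$-chains appearing in the statement, I would invoke the standard bijective correspondence alluded to before Theorem~\ref{f_vector}: a $k$-chain decomposes uniquely as a pair consisting of its underlying strict $j$-chain together with an ordered tuple of positive multiplicities summing to $k$. This gives the bijective identity
\[
c_k(P) = \sum_{j=1}^{k} \binom{k-1}{j-1}\, c_j^{<}(P),
\]
where $c_k(P)$ counts $k$-chains and $c_j^{<}(P)$ counts strict $j$-chains in $P$. Applied to both the shard lattice and the BHZ poset, the equality of strict chain counts just established transfers to an equality of $k$-chain counts, which is the content of Proposition~\ref{weak-main}.

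The only substantive obstacle in this approach lies in Section~\ref{s:triangle}, namely in actually constructing the dimension-preserving bijection from the order complex of the BHZ poset to the pulling triangulation $T$. By analogy with Reading's construction for shards, one expects this to require a concrete combinatorial model for strict chains in the BHZ order that matches the recursive structure of the pulling order on $T$; once that model is established, the assembly above is purely formal. Note also that this approach is strictly weaker than Theorem~\ref{main}, since the bijections work at the level of total chain counts and do not visibly respect the support of the maximal element, so the refinement by support $J$ must be obtained by the independent recursion of Section~\ref{s:chains}.
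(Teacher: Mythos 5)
Your proposal follows exactly the paper's second proof of this proposition: composing the dimension-preserving bijection of Theorem~\ref{thm:triangulation} with Reading's bijection from \cite[Theorem~1.5]{shard_long} to match strict chain counts, then converting to weakly increasing $k$-chains via the standard identity $c_k(P)=\sum_j \binom{k-1}{j-1}c_j^{<}(P)$. The argument is correct, with the substantive work correctly identified as residing in the construction of the bijection in Section~\ref{s:triangle}.
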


This proposition is an obvious corollary of Theorem \ref{main}, but in fact,
we show in Section \ref{s:chains} that
Theorem \ref{main} can also be deduced from it, by an
inclusion-exclusion argument.

\section{Preliminaries and first lemmas}

\label{sec:notations}

Let $(W, S)$ be a finite Coxeter system, where $W$ is a Coxeter group
and $S$ is the fixed set of simple reflections.  The letter $n$ will always denote the rank of $W$, which is the
cardinality of $S$. Let $\ell(w)$ denote the length of
an element $w\in W$ with respect to the set $S$ of generators. The symbol $e$ will be
the unit of $W$, and $w_{\circ}$ the unique longest element. We refer the reader unfamiliar with the subject to~\cite{Hu90,GePf00,BjBr05} for general references on (finite) Coxeter groups.

For an element $w$ in $W$, we will use the following (more or less
standard) notations. The {\em set $D_R(w)$  of right descents} of $w$ is the set of
elements $s\in S$ such that $\ell(w s) < \ell(w)$. Let $S(w)$ be the
{\em support of} $w$, that is, the set of elements $s \in S$ such that $s$
appears in some (or equivalently any) reduced word for $w$.

The following notations are needed for the definition of the shard
order. Let $G(w)$ be the subgroup of $W$ generated by $w s w^{-1}$
for $s \in D_R(w)$. This is a conjugate of a standard parabolic subgroup.
Let $N(w)$ be the set of (left) inversions of $w$, defined as
$\Phi^+ \cap w(-\Phi^+)$, where $\Phi^+$ is the set of positive roots
in a root system $\Phi$ for $(W,S)$.  

\smallskip
We also need the standard properties of {\em parabolic decomposition} of
elements of $W$. Let us recall them briefly. 
For a subset $I \subseteq S$, let $W_I$ be the (standard) parabolic subgroup of
$W$ generated by the simple reflections in $I$.
Given a subset $I$ of $S$, there is a factorisation $W = W^I W_I$,
where $W_I$ is the parabolic subgroup and $W^I$ the set of minimal
length coset representatives of classes in $W / W_I$. Equivalently,
every $w \in W$ can be uniquely written $w = w^I w_I$ where
$w_I \in W_I$ and the right descents of $w^I$ do not belong to $I$. Moreover, the 
expression $w=w^I w_I$ is reduced, meaning that $\ell(w)=\ell(w^I)+\ell(w_I)$.
Following~\cite{BjBr05}, we call the pair
$(w^I,w_I)$ {\em the parabolic  components of $w\in W$}.  We denote by
$w_{\circ,I}$ the longest element of $W_I$.  

Given subsets $I \subseteq J$ of $S$, the more general notation $W_J^I=W_J\cap W^I$
stands for the set of elements of the parabolic subgroup $W_J$ that
have no right descents in $I$. Then there is a unique factorisation
$W_J = W_J^I W_I$, similar to the previous one and denoted in the same way; see \cite[Chapter 3]{GePf00} or \cite{BBHT92} 
for more details on these decompositions.

\smallskip
The following lemma is proved by M\"obius inversion on the boolean
lattice of subsets of $S$.
\begin{lemma}
  \label{nombre_indec}
  The number of elements of $W$ with support $S$ is
  \begin{equation}
    \# \{u \in W \mid S(u) = S\} = \sum_{J \subseteq S} (-1)^{n - \# J} \# W_J.
\end{equation}
\end{lemma}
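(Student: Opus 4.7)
The plan is to apply Möbius inversion on the Boolean lattice of subsets of $S$, exactly as the hint in the lemma's preamble suggests. The key auxiliary identity I will establish first is
\begin{equation*}
  \#W_J \;=\; \sum_{I \subseteq J} \#\{u \in W \mid S(u) = I\}
\end{equation*}
for every $J \subseteq S$. This amounts to showing that $W_J$ consists precisely of those $u \in W$ with $S(u) \subseteq J$. One direction is immediate: an element of $W_J$ admits a reduced word in simple reflections from $J$, so its support is contained in $J$. The reverse direction uses a standard fact about Coxeter groups, namely that any two reduced words for $u$ employ the same set of simple reflections; thus $S(u) \subseteq J$ forces every reduced expression for $u$ to lie in $W_J$.

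Once this identity is in place, writing $f(I) = \#\{u \in W \mid S(u) = I\}$ and $g(J) = \#W_J$, the relation becomes $g(J) = \sum_{I \subseteq J} f(I)$. Möbius inversion on the Boolean lattice $2^S$, whose Möbius function is $\mu(I,J) = (-1)^{\#J - \#I}$, then gives
\begin{equation*}
  f(J) \;=\; \sum_{I \subseteq J} (-1)^{\#J - \#I}\, \#W_I.
\end{equation*}
Specializing to $J = S$ yields exactly the stated formula, since $n = \#S$.

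The argument is entirely routine; there is no real obstacle. The only point that deserves a sentence of care is the equality $W_J = \{u \in W \mid S(u) \subseteq J\}$, since it silently invokes the reduced-word characterization of support noted in Section~\ref{sec:notations}. Everything else is bookkeeping with the Möbius function on a Boolean lattice.
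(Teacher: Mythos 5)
Your proof is correct and follows exactly the route the paper intends: the paper simply states that the lemma "is proved by M\"obius inversion on the boolean lattice of subsets of $S$," and you have filled in precisely that argument, including the key identity $W_J = \{u \in W \mid S(u) \subseteq J\}$. No issues.
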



\smallskip

Finally, we discuss a second lemma that is crucial for the proof of Theorem~\ref{main}. Fix $J\subseteq S$ and $I\subseteq J$. We introduce two sets attached to the pair $(I,J)$. To distinguish between the two orders considered in this article, we will use the symbols $\sha$ and $\bhz$ as markers standing for
shard and for BHZ respectively. The first one is
\begin{equation}
  \label{E_bhz}
  E_{\bhz}(I,J) = \{u\in W_J^I \mid J \setminus I \subseteq S(u) \}.
\end{equation}
The second one is
\begin{equation}
  \label{E_sha}
  E_{\sha}(I,J) = \{w\in W_J \mid S(w)=J,\ I\subseteq D_R(w) \subseteq J\}.
\end{equation}
Both these sets are subsets of $W_J$. 

\begin{lemma}
  \label{lemme_central}
  The map $\psi: w \mapsto u = w^I$ defines a bijection from $E_{\sha}(I,J)$ to $E_{\bhz}(I,J)$, with inverse map $\varphi: u \mapsto w = u w_{\circ,I}$.
\end{lemma}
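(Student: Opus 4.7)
The plan is to exploit the uniqueness of the parabolic decomposition relative to $I$ inside $W_J$, together with one key observation: an element $w\in W_J$ satisfies $I\subseteq D_R(w)$ if and only if its $I$-part $w_I$ equals $w_{\circ,I}$. This follows from the standard fact that, in the factorisation $w=w^I w_I$ (with $w^I\in W_J^I$ and $w_I\in W_I$), the intersection $D_R(w)\cap I$ equals the set of right descents of $w_I$ computed inside $W_I$; so $I\subseteq D_R(w)$ forces $w_I$ to have all of $I$ as descents, hence $w_I=w_{\circ,I}$.

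First I would check that $\psi$ lands in $E_{\bhz}(I,J)$. For $w\in E_{\sha}(I,J)$, the observation above gives $w=w^I w_{\circ,I}$ with this product reduced. Since reduced products have support equal to the union of the supports of the factors, $J=S(w)=S(w^I)\cup I$, so $J\setminus I\subseteq S(w^I)$; and by construction $w^I\in W_J^I$. Thus $u=w^I\in E_{\bhz}(I,J)$, and moreover $\varphi(\psi(w))=w^I w_{\circ,I}=w$.

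Next I would check that $\varphi$ lands in $E_{\sha}(I,J)$. For $u\in E_{\bhz}(I,J)$, the product $w=u w_{\circ,I}$ is reduced because $u\in W_J^I$ has no right descent in $I$, so by uniqueness of the parabolic decomposition we have $w^I=u$ and $w_I=w_{\circ,I}$. In particular $I\subseteq D_R(w_{\circ,I})\subseteq D_R(w)$; the inclusion $D_R(w)\subseteq J$ is automatic as $w\in W_J$. For the support, reducedness gives $S(w)=S(u)\cup I$, which lies in $J$ and contains $(J\setminus I)\cup I=J$, so $S(w)=J$. Hence $w\in E_{\sha}(I,J)$, and uniqueness of the parabolic decomposition yields $\psi(\varphi(u))=(uw_{\circ,I})^I=u$.

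There is no real obstacle here: the content is entirely the single identification $w_I=w_{\circ,I}$ when $I\subseteq D_R(w)$, after which the two maps $\psi$ and $\varphi$ are manifestly mutually inverse by uniqueness of parabolic decomposition, and the support condition translates cleanly through the reduced factorisation $w=w^I w_{\circ,I}$.
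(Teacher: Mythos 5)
Your proof is correct and follows essentially the same route as the paper's: the whole argument hinges on the identification $w_I=w_{\circ,I}$ when $I\subseteq D_R(w)$ (which the paper derives directly from $\ell(ws)<\ell(w)$ for $s\in I$ and the definition of $W^I$, rather than quoting the fact $D_R(w)\cap I=D_R(w_I)$), after which uniqueness of the parabolic decomposition and the additivity of supports along reduced factorisations give both well-definedness and mutual inversion exactly as you describe.
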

\begin{proof}
  First, let $w\in E_{\sha}(I,J)$ and decompose $w$ in parabolic components:
  $w=u v$ with $u=w^I\in W_J^I$ and $v=w_I\in W_I$. Since
  $S(v)\subseteq I$, $S(w)= J$ and $u v$ is a reduced expression (i.e.,
  the concatenation of reduced words for $u$ and for $v$ gives a
  reduced word for $w$), we have $J\setminus I\subseteq S(u)$. Note
  also that, since $I\subseteq D_R(w)$, we have
  $v=w_{\circ, I}$. Indeed, since $\ell(ws)<\ell(w)$ for all $s\in I$,
  we have by definition of $W^I$ that $\ell(vs)<\ell(v)$ for all
  $s\in I$. Since $v\in W_I$ this forces 
  \begin{equation}
    \label{eq:w0I}
    v=w_{\circ,I}.
  \end{equation}
  By uniqueness of the parabolic components, this defines a map
  $\psi: E_{\sha}(I,J) \to E_{\bhz}(I,J)$, defined by $w\mapsto w^I$.

  Now, we show that the map $\varphi:E_{\bhz}(I,J)\to E_{\sha}(I,J)$, defined by
  $u\mapsto uw_{\circ, I}$, is well-defined. This will imply, using \eqref{eq:w0I},
  that $\varphi=\psi^{-1}$ and therefore our claim.

  Since $J\setminus I \subseteq S(u)$, $S(w_{\circ, I})=I$ and the fact
  that the expression $u w_{\circ, I}$ is reduced, we have
  $S(\varphi(u))=S(uw_{\circ, I})=J$. Since
  $\ell(w_{\circ,I}s)<\ell(w_{\circ,I})$ for all $s\in I$, we have
  $\ell(uw_{\circ,I}s)<\ell(uw_{\circ,I})$ for all $s\in I$.
    Hence $I \subseteq
  D_R(uw_{\circ,I})=D_R(\varphi(u))$.
  Therefore, $\varphi(u)\in E_{\sha}(I,J)$ and the claim is proven.
\end{proof}

\section{Shard lattice and BHZ posets}
\label{s:def}

In this section, we will recall the definitions and main properties of
the shard and BHZ partial orders. For proofs and details, the reader
should refer to the articles \cite{shard_court, shard_long, cataland}
and \cite{bhz}.

Recall that, in order to distinguish between the two orders considered in this article, we will use the symbols $\sha$ and $\bhz$ as markers standing for shards and for BHZ respectively.

\subsection{Shard intersection order}

The shard order on $W$ can be defined as follows.
Let $u$ and $v$ in $W$. Then $u \leq_{\sha} v$ if and only if $N(u) \subseteq N(v)$ and $G(u) \subseteq G(v)$.
Recall that $N(u)$ is the set of (left) inversions of $u$ and $G(u)$
is the subgroup defined in section \ref{sec:notations}.
This definition is not the original one from \cite{shard_court, shard_long}, but comes from the reformulation explained in \cite[\S 4.7]{cataland}.

Here are some basic properties. The unique minimal element is $e$, the
unique maximal element is $w_{\circ}$.
The partial order $\leq_{\sha}$ is ranked, and the rank
$\rk_{\sha}(u)$ of an element $u$ is the cardinality of the descent
set $D_R(u)$.

Moreover, for every $v$, the interval $[e,v]$ is isomorphic to the shard
order for the parabolic subgroup associated to the set of right descents $D_R(w)$.

By \cite[Th. 4.3]{shard_court}, the M\"obius function satisfies
\begin{equation}
  \label{mob_sha}
  \mu_{\sha}(e,v) = \sum_{J \subseteq D_R(v)} (-1)^{\#J} \# W_J.
\end{equation}

\begin{figure}[ht]
  \begin{center}
    \includegraphics[height=4cm]{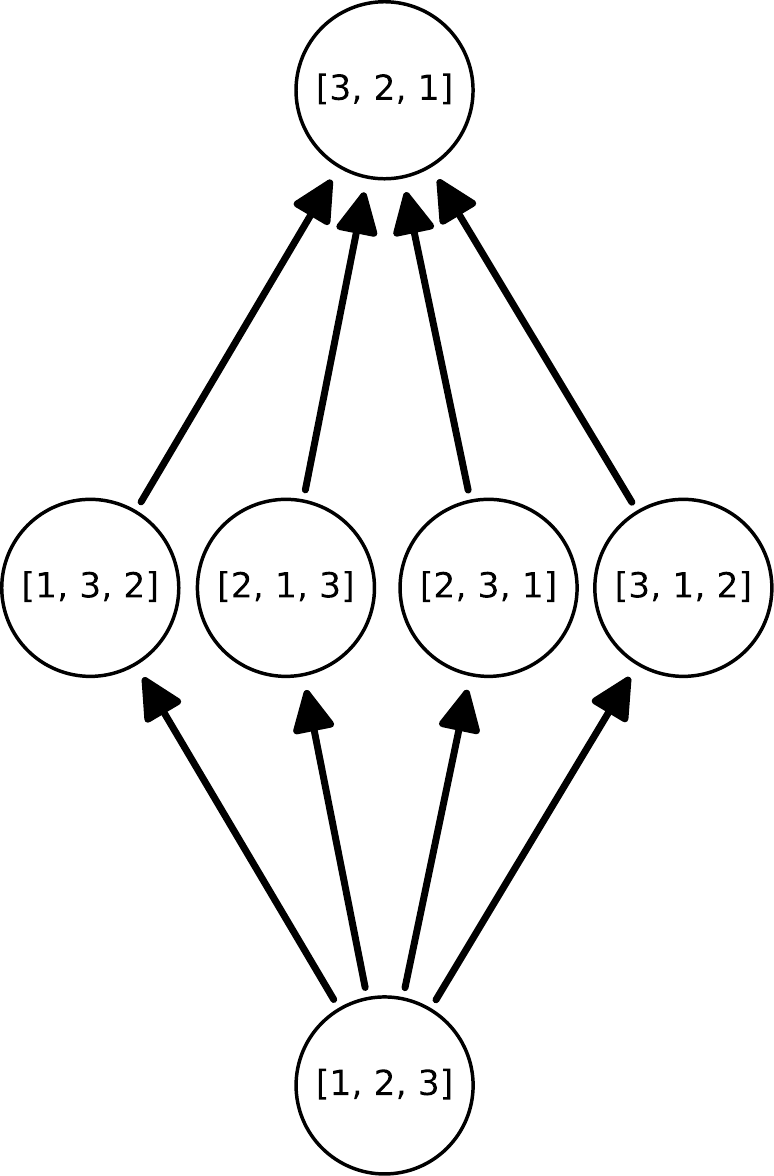}
    \caption{The shard order on the symmetric group $S_3$}
    \label{expl_type}
  \end{center}
\end{figure}

\subsection{The BHZ order}

The BHZ partial order on $W$ is defined as follows: $u \leq_{\bhz} v$ if and only if $v_{S(u)} = u$.

The partial order $\leq_{\bhz}$ is ranked and the rank $\rk_{\bhz}(u)$
of an element $u$ is the cardinality of the support $S(u)$.
The unique minimal element is $e$, and the maximal elements are the
elements of $W$ of support $S$.

The M\"obius function is described completely by \cite[Th. 4]{bhz}:
\begin{equation}
  \label{mob_bhz}
  \mu_{\bhz}(u,v) = 
  \begin{cases}
    (-1)^{\#S(v)-\#S(u)} \quad \text{ if } S(u) = S(v)\setminus D_R(v^{S(u)}),\\
    0 \quad \text{otherwise.}
  \end{cases}
\end{equation}

Let us now describe the principal upper ideal of an element. Let $u$
be an element with support $S(u) = I$.
\begin{proposition}
  The principal upper ideal of $u$ in the BHZ poset $\leq_{\bhz}$ is
  in bijection with $W^I$ by the map that sends $v$ to $v^I$.
\end{proposition}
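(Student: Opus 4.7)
The plan is to unfold the definition of $\leq_{\bhz}$ and then invoke the parabolic decomposition recalled in Section~\ref{sec:notations}. Setting $I=S(u)$, the principal upper ideal of $u$ is by definition
\[
  \{v \in W \mid v_I = u\},
\]
so the statement becomes: the second parabolic component map $v \mapsto v_I$ fibers $W$ into pieces indexed by $W_I$, each fiber being in bijection with $W^I$ via $v\mapsto v^I$.

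First I would note that $u \in W_I$ (because $S(u) = I$), so asking that $v_I = u$ makes sense as a condition inside the decomposition $W = W^I W_I$. Then I would introduce the candidate inverse $\varphi : W^I \to \{v : v_I = u\}$, $x \mapsto xu$. The key fact to invoke is the uniqueness of the parabolic decomposition: every element of $W$ has a unique expression of the form $x'u'$ with $x'\in W^I$ and $u'\in W_I$, and this expression is automatically length-additive. Applied to the element $xu$ with $x\in W^I$ and $u\in W_I$, this tells us that $(xu)^I = x$ and $(xu)_I = u$, so $\varphi(x)$ indeed lies in the fiber above $u$ and satisfies $\psi(\varphi(x)) = x$.

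Conversely, for any $v$ in the fiber, we have by definition $v = v^I v_I = v^I u$, which is $\varphi(v^I) = \varphi(\psi(v))$; hence $\varphi \circ \psi$ is the identity. Thus $\psi : v \mapsto v^I$ is a bijection from the principal upper ideal of $u$ onto $W^I$, with inverse $\varphi$.

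I do not anticipate any real obstacle: the proposition is essentially a direct unpacking of the definition of $\leq_{\bhz}$ against the parabolic factorisation $W = W^I W_I$. The only point that deserves an explicit mention is the observation $u\in W_I$ coming from $S(u)=I$, which is what allows $u$ to appear as the $W_I$-part of an element.
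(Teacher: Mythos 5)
Your proof is correct and matches the paper's intent: the paper simply states that the proposition ``is just a direct consequence of the definition of the BHZ order,'' and your argument is exactly the unpacking it has in mind, namely that the upper ideal is the fiber $\{v \mid v_I = u\}$ of the parabolic decomposition $W = W^I W_I$, which is in bijection with $W^I$ via $v \mapsto v^I$ with inverse $x \mapsto xu$. No gaps.
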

This is just a direct consequence of the definition of the BHZ order.

\smallskip

Recall the set $E_{\bhz}(I,J)$ defined in \eqref{E_bhz}.
\begin{proposition}
  \label{last_step_bhz}
  The set of elements $v$ such that $u \leq_{\bhz} v$ and $S(v)=J$
  is in bijection with $E_{\bhz}(I,J)$.
\end{proposition}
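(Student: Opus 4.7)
The plan is to specialize the bijection from the preceding proposition (upper ideal of $u$ in bijection with $W^I$ via $v \mapsto v^I$) to those elements $v$ of support exactly $J$, and to identify the image with $E_{\bhz}(I,J)$.

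First I would unpack the preceding proposition. If $u \leq_{\bhz} v$ and $S(u) = I$, then by definition of the BHZ order, the parabolic component $v_I$ equals $u$, so $v = v^I\, u$ is the parabolic decomposition of $v$, with $v^I \in W^I$. Since this decomposition is reduced, the support splits as
\begin{equation*}
  S(v) \;=\; S(v^I) \cup S(u) \;=\; S(v^I) \cup I.
\end{equation*}

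Next, I would translate the condition $S(v) = J$ through this equality. The equation $S(v^I) \cup I = J$ amounts to the conjunction of (i) $S(v^I) \subseteq J$ and (ii) $J \setminus I \subseteq S(v^I)$. Combining (i) with $v^I \in W^I$, and using the definition $W_J^I = W_J \cap W^I$ recalled in Section~\ref{sec:notations}, gives exactly $v^I \in W_J^I$. Condition (ii) is the remaining defining condition of $E_{\bhz}(I,J)$. So the map $v \mapsto v^I$ sends $\{v : u \leq_{\bhz} v,\ S(v) = J\}$ into $E_{\bhz}(I,J)$.

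Finally, I would exhibit the inverse: given $u' \in E_{\bhz}(I,J)$, set $v = u'\, u$. Since $u' \in W^I$ and $u \in W_I$, the product $u' u$ is a reduced parabolic decomposition with $v^I = u'$ and $v_I = u$; hence $u \leq_{\bhz} v$, and $S(v) = S(u') \cup I = J$ by the two conditions defining $E_{\bhz}(I,J)$. This shows the map is a bijection.

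I do not foresee a real obstacle: once the preceding proposition is invoked and the standard fact that parabolic decompositions are reduced (so that supports add up as unions) is applied, the statement reduces to parsing the two defining conditions of $E_{\bhz}(I,J)$. The only point worth stating explicitly is the equivalence between ``$v^I \in W^I$ and $S(v^I) \subseteq J$'' and ``$v^I \in W_J^I$''.
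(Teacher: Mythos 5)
Your proposal is correct and follows essentially the same route as the paper, which simply cites the previous proposition together with the uniqueness of the decomposition $W_J = W_J^I W_I$; you have merely written out the details (the support computation $S(v)=S(v^I)\cup I$ and the identification $W_J^I = W_J\cap W^I$) that the paper leaves implicit.
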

This follows from the previous proposition, together with the uniqueness of the 
decomposition $W_J = W_J^I W_I$.

\begin{figure}[ht]
  \begin{center}
    \includegraphics[height=4cm]{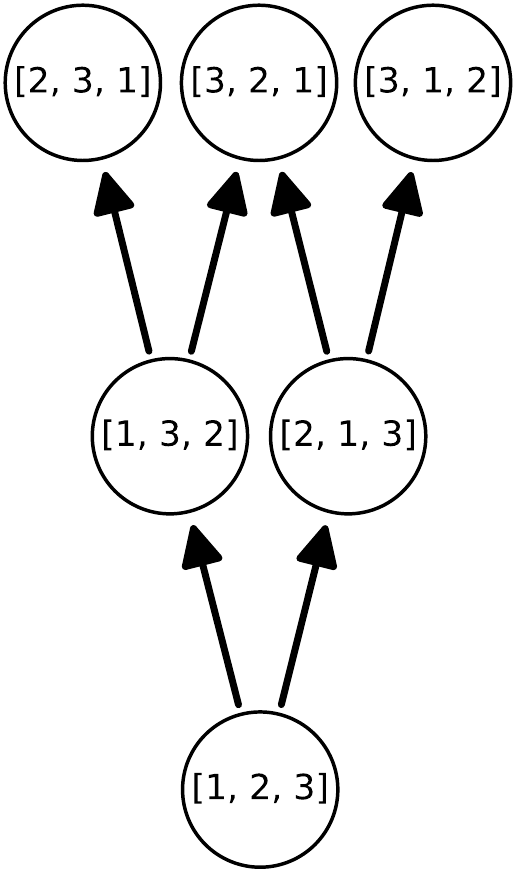}
    \caption{The BHZ order on the symmetric group $S_3$}
    \label{expl_BHZ}
  \end{center}
\end{figure}

\section{Counting chains and proof of Theorem~\ref{main}}

\label{s:chains}

In this section, we obtain recursions (on the length $k$) for the
numbers of $k$-chains for both shard order and BHZ order.

It turns out that Lemma \ref{lemme_central} allows us to identify
these recursions. The initial conditions for $k=1$ also match, as they
amount to counting elements with given support in $W$. This implies
Theorem~\ref{main}.

\subsection{Counting chains in shards}

Let $\ch^{\sha}_{J,k}$ be the set of $k$-chains in the shard order such that
the top element of the chain has support $J$. Recall the set
$E_{\sha}(I,J)$ defined in \eqref{E_sha}.

\begin{proposition}
  For $k \geq 2$, the numbers of $k$-chains in the shard order satisfy
  \begin{equation}
    \# \ch^{\sha}_{J,k} = \sum_{I \subseteq J} \# E_{\sha}(I,J)\, \#\ch^{\sha}_{I,k-1}.
  \end{equation}
\end{proposition}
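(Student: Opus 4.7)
The plan is to reduce counting $k$-chains with top of support $J$ to counting shorter chains in smaller parabolic shard posets, using the interval isomorphism $[e, v]_{\sha} \cong$ shard lattice on $W_{D_R(v)}$ that was recalled earlier.

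First, I would decompose the set $\ch^{\sha}_{J,k}$ according to the top element:
\begin{equation*}
\# \ch^{\sha}_{J, k} = \sum_{v \in W,\, S(v) = J} \#\{(k-1)\text{-chains in } [e, v]_{\sha}\}.
\end{equation*}
The interval isomorphism converts each summand into the number of $(k-1)$-chains in the shard order on the standard parabolic subgroup $W_{D_R(v)}$.

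Next, I would stratify these chains by the support $I$ of their top element, which must satisfy $I \subseteq D_R(v)$. Two standard hereditary facts are needed here. First, if $u \leq_{\sha} w$, then $S(u) \subseteq S(w)$; this follows from $N(u) \subseteq N(w)$ together with the fact that $N(w) \subseteq \Phi_{S(w)}^+$ since $w \in W_{S(w)}$. Second, the shard order on any standard parabolic $W_K$ coincides with the restriction of the shard order on $W$ to $W_K$, because the sets $N(u)$ and $G(u)$ are intrinsic to $u$ and independent of the ambient Coxeter group. Together, these imply that a $(k-1)$-chain with top of support $I$ in the shard order on $W_{D_R(v)}$ lives entirely in $W_I$ and corresponds to an element of $\ch^{\sha}_{I, k-1}$.

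Putting these together and exchanging the order of summation yields
\begin{equation*}
\# \ch^{\sha}_{J, k} = \sum_{I \subseteq J} \#\{v \in W : S(v) = J,\; I \subseteq D_R(v)\} \cdot \# \ch^{\sha}_{I, k-1}.
\end{equation*}
Since $S(v) = J$ forces $v \in W_J$ and hence $D_R(v) \subseteq J$ automatically, the inner set coincides with $E_{\sha}(I, J)$ by definition, establishing the recursion.

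The main obstacle is verifying the two hereditary properties (monotonicity of support along $\leq_{\sha}$ and the compatibility of the shard order with standard parabolic restriction); both are routine from the inversion-theoretic definitions but should be spelled out explicitly to justify passing from $(k-1)$-chains in the shard order on $W_{D_R(v)}$ to elements of $\ch^{\sha}_{I, k-1}$ under the interval isomorphism. Once these are in place, the argument is a straightforward bookkeeping of sums.
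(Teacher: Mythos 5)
Your proposal is correct and follows essentially the same route as the paper: decompose by the top element $v$ with $S(v)=J$, use the interval isomorphism $[e,v]_{\sha}\cong (W_{D_R(v)},\leq_{\sha})$ to reduce to $(k-1)$-chains, stratify by the support $I$ of the new top element, and exchange summations to recognize $\#E_{\sha}(I,J)$. The only difference is that you explicitly verify the support-monotonicity of $\leq_{\sha}$ and the compatibility of the shard order with standard parabolic restriction, which the paper leaves implicit; both facts are true and your justifications are sound.
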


\begin{proof}
  The cardinality of $\ch^{\sha}_{J,k}$ is the sum
  \begin{equation*}
    \sum_{\{x \mid S(x)=J\}} \# \ch^{\sha}_{x,k},
  \end{equation*}
  where $\ch^{\sha}_{x,k}$ is the set of $k$-chains ending with $x$.

  Now $k$-chains with top element $x$ are in bijection with
  $k$-chains in the shard order for a parabolic subgroup of type
  $D_R(x)$. Their number only depends on the set $D_R(x)$ and is the
  sum
  \begin{equation}
    \sum_{I \subseteq D_R(x)} \# \ch^{\sha}_{I,k-1}.
  \end{equation}
  Therefore one gets
  \begin{equation*}
     \# \ch^{\sha}_{J,k} =  \sum_{\{x \mid S(x)=J\}} \sum_{I \subseteq D_R(x)} \# \ch^{\sha}_{I,k-1}.
  \end{equation*}
  Exchanging the summations, one obtains
  \begin{equation*}
   \sum_{I \subseteq J} \left(\sum_{\{x \mid S(x)=J,\, I \subseteq D_R(x)\}} 1\right) \# \ch^{\sha}_{I,k-1}.
  \end{equation*}
  The inner sum is exactly the
  cardinality of $E_{\sha}(I,J)$ as defined in \eqref{E_sha}.
\end{proof}

\subsection{Counting chains in BHZ posets}

Let $\ch^{\bhz}_{J,k}$ be the set of $k$-chains in the BHZ order such that
the top element of the chain has support $J$. Recall the set
$E_{\bhz}(I,J)$ defined in \eqref{E_bhz}.

\begin{proposition}
  For $k \geq 2$, these numbers of $k$-chains in the BHZ order satisfy
  \begin{equation}
    \#\ch^{\bhz}_{J,k} = \sum_{I \subseteq J} \# E_{\bhz}(I,J) \, \#\ch^{\bhz}_{I,k-1}.
  \end{equation}
\end{proposition}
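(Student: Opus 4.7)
The plan is to count $k$-chains by conditioning on the penultimate element $u_{k-1}$, in contrast with the shard argument of the previous subsection which conditions on the top element $u_k$ and uses the interval $[e, u_k]$. The reason for this reversal is that the BHZ order does not enjoy the nice lower-interval structure of the shard order, whereas it does have a transparent \emph{upper}-filter structure: by Proposition~\ref{last_step_bhz}, for any $u$ with $S(u) = I$, the set of elements $v$ with $u \leq_{\bhz} v$ and $S(v) = J$ is in bijection with $E_{\bhz}(I,J)$, and so in particular its cardinality depends only on the pair $(I, J)$.

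Concretely, any $k$-chain $u_1 \leq_{\bhz} \cdots \leq_{\bhz} u_k$ with $S(u_k)=J$ decomposes uniquely as a $(k-1)$-chain $u_1 \leq_{\bhz} \cdots \leq_{\bhz} u_{k-1}$ followed by a single step $u_{k-1} \leq_{\bhz} u_k$. Since $u_{k-1} \leq_{\bhz} u_k$ forces $u_{k-1} = (u_k)_{S(u_{k-1})}$, the support $I := S(u_{k-1})$ is automatically contained in $J$. I would then group the count first by $I \subseteq J$, then by a choice of $u_{k-1}$ of support $I$, and finally by the choice of $u_k \geq_{\bhz} u_{k-1}$ with $S(u_k) = J$. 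Proposition~\ref{last_step_bhz} contributes the factor $\# E_{\bhz}(I,J)$ for the last choice --- crucially independent of the particular $u_{k-1}$ of support $I$ --- while summing the number of $(k-1)$-chains ending at each $u_{k-1}$ of support $I$ yields $\# \ch^{\bhz}_{I,k-1}$ by the very definition of $\ch^{\bhz}_{I,k-1}$. Combining these immediately produces the claimed identity.

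There is no genuine obstacle: once Proposition~\ref{last_step_bhz} is in hand, the argument is essentially a one-line double count, entirely parallel to the shard recursion in the previous subsection (with the roles of ``top'' and ``penultimate'' swapped and ``lower interval'' replaced by ``upper filter''). The only small point to keep in mind is the hypothesis $k \geq 2$, which is exactly what is needed for the penultimate element $u_{k-1}$ to exist; the base case $k=1$ reduces to counting elements of $W$ of given support, which matches the shard side and is what ultimately triggers the inductive proof of Theorem~\ref{main}.
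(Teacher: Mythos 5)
Your proof is correct and follows essentially the same route as the paper's: both condition on the penultimate element, group by its support $I$, and invoke Proposition~\ref{last_step_bhz} to see that the number of admissible top elements depends only on the pair $(I,J)$. The additional remarks about why one conditions on the penultimate rather than the top element are accurate and in the spirit of the paper's presentation.
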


\begin{proof}
  There is a bijection between $k$-chains and triples (element $u$,
  $(k-1)$-chain with top element $u$, element $v$ greater than or equal to $u$).

  By proposition \ref{last_step_bhz}, given an element $u$ of support
  $I$, the number of possible $v$ with $u \leq_{\bhz} v$ and
  $S(v) = J$ is the cardinality of $E_{\bhz}(I,J)$.

  Therefore the number of $k$-chains with last element of support $J$
  and next-to-last element $u$ only depends on the support $I$ of
  $u$. The formula follows by gathering $k$-chains according to
  the support of their next-to-last element $u$.
\end{proof}

\subsection{Equivalence of Theorem \ref{main} and Proposition \ref{weak-main}}

It is clear that Proposition \ref{weak-main} is a weakening of Theorem
\ref{main}.  We wish to show, conversely, that Theorem \ref{main} can
be deduced from Proposition \ref{weak-main}.  In section \ref{s:triangle},
we will then give an independent proof of Proposition \ref{weak-main}, which,
together with the current argument, constitute a separate proof of Theorem
\ref{main}.

Suppose that we know Proposition \ref{weak-main} holds for any finite
Coxeter group.  Write $\#\ch^\sha_k(W)$ for the number of $k$-chains in
$W$ with respect to $\leq_\sha$.  

Because there are natural inclusions of posets
$(W_I, \leq_\sha) \subseteq (W_J, \leq_\sha)$ for subsets
$I \subseteq J$, one can show that
\begin{equation*}
\#\ch^\sha_{J,k}(W)=\#\left(\ch^\sha_k(W_J)\setminus \bigcup_{K\subset J}\ch^\sigma_k(W_K)\right).
\end{equation*}
By inclusion-exclusion, this can be expressed as:
\begin{equation*}
\#\ch^\sha_{J,k}(W)=\sum_{K\subseteq J} (-1)^{|J|-|K|} \#\ch^\sha_k(W_K).
\end{equation*}

Similarly, because there are also natural inclusions
$(W_I, \leq_\bhz) \subseteq (W_J, \leq_\bhz)$ for subsets $I \subseteq J$, one can
show that
\begin{equation*}
\#\ch^\bhz_{J,k}(W)=\sum_{K\subseteq J} (-1)^{|J|-|K|} \#\ch^\bhz_k(W_K).
\end{equation*}
The right-hand sides of these two equations are equal by Proposition \ref{weak-main}, and this shows that Proposition \ref{weak-main} implies Theorem
\ref{main}.

\section{An equality of characteristic polynomials}

\label{s:charpoly}

In this section, we will compare two generating series, the first one
built from M\"obius numbers from the bottom in the shard order, the
second one from M\"obius numbers to the top elements in the BHZ order. They
turn out to be equal, up to sign.

\subsection{Characteristic polynomials for shards}

Let us consider the following polynomial in one variable:
\begin{equation}
  \chi_{\sha}(q) = \sum_{v} \mu_{\sha}(e, v) q^{n - \rk_{\sha}(v)}.
\end{equation}
This is exactly the characteristic polynomial of the shard poset, as
defined usually for graded posets with a unique minimum. We will instead compute the more refined generating series
\begin{equation}
  X_{\sha} = \sum_{v} \mu_{\sha}(e, v) Z_{D_R(v)},
\end{equation}
with variables $Z$ indexed by subsets of $S$. This reduces to $\chi_{\sha}(q)$ by sending $Z_I$ to $q^{n - \# I}$.

Using the known value \eqref{mob_sha} for the M\"obius function in the shard order, one finds
\begin{equation}
  X_{\sha} = \sum_{v} \sum_{I \subseteq D_R(v)} (-1)^{\#I} \# W_I Z_{D_R(v)},
\end{equation}
Introducing $J$ to represent $D_R(v)$ and exchanging the summations, this becomes
\begin{equation}
  X_{\sha} = \sum_{I \subseteq J} (-1)^{\#I} \# W_I  Z_J \sum_{\{v \mid D_R(v) = J\}} 1.
\end{equation}

\subsection{Characteristic polynomials for BHZ posets}

Let us consider the following polynomial
\begin{equation}
  \chi_{\bhz}(q) = \sum_{\overset{u \leq_{\bhz} v}{\rk(v) = n}} \mu_{\bhz}(u,v) q^{n - \rk_{\bhz}(u)}.
\end{equation}
This is something like a characteristic polynomial of the opposite of
the BHZ poset, except that there are several maximal elements, so there is
an additional summation over them.

In fact, let us instead compute the more refined generating series
\begin{equation}
  X_{\bhz} = \sum_{\overset{u \leq_{\bhz} v}{\rk(v) = n}} \mu_{\bhz}(u,v) Z_{S(u)},
\end{equation}
with variables $Z$ indexed by subsets of $S$.

By the known value of the M\"obius function \eqref{mob_bhz} in the BHZ order, one obtains
\begin{equation}
  \sum_{\{u \leq_{\bhz} v \mid S(v) = S,\, S(u) = S(v) \setminus D_R(v^{S(u)})\}} (-1)^{\#S(v)-\#S(u)} Z_{S(u)}.
\end{equation}
Let us use $J$ to denote $S(u)$. Then for $v$ fixed, $u$ is determined
from $J$ by the relation $u = v_J$, by definition of the BHZ partial order. One can therefore replace the summation over $u$ and $v$
by a summation over $J, v^J$ and $v_J$. One obtains
\begin{equation}
  \sum_{J} (-1)^{n-\#J} Z_{J} \sum_{\{v^J \in W^J, v_J\in W_J \mid S(v_J) = J,\, J = S \setminus D_R(v^J)\}} 1,
\end{equation}
where the condition that $S(v^J v_J) = S$ has been removed
because it is implied by the other conditions. The inner sum can be
factorised into the product of
\begin{equation}
  \sum_{\{v_J \in W_J \mid S(v_J) = J\}} 1 = (-1)^{\# J}\sum_{I \subseteq J} (-1)^{\#I} \# W_I,
\end{equation}
and
\begin{equation}
  \sum_{\{v^J \in W^J \mid D_R(v^J) = S\setminus J\}} 1 = \sum_{\{x \mid D_R(x) = S\setminus J\}} 1,
\end{equation}
where we used the definition of $W^J$.

At the end, one finds
\begin{equation}
  X_{\bhz} =  (-1)^n \sum_{I \subseteq J} (-1)^{\#I} \#W_I Z_J \sum_{\{x \mid D_R(x) = S\setminus J\}} 1.
\end{equation}

Note that by using the involution $v \leftrightarrow w_{\circ} v$, the
inner sum is the same as the sum over $x$ such that $D_R(x)=J$. This
proves the following result.

\begin{proposition}
  For every finite Coxeter group $W$, there is an equality $X_{\sha} = (-1)^n X_{\bhz}$.
\end{proposition}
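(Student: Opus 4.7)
The plan is to read off the equality directly from the two computations in the preceding subsections, together with one standard fact about the longest element. Those subsections have produced the closed forms
\begin{equation*}
X_\sha \;=\; \sum_{I \subseteq J \subseteq S} (-1)^{\#I}\,\#W_I\, Z_J\, \#\{v \in W \mid D_R(v) = J\}
\end{equation*}
and
\begin{equation*}
X_\bhz \;=\; (-1)^n \sum_{I \subseteq J \subseteq S} (-1)^{\#I}\,\#W_I\, Z_J\, \#\{x \in W \mid D_R(x) = S \setminus J\}.
\end{equation*}
These expressions are identical save for two differences: the inner cardinality in $X_\sha$ counts elements of $W$ with right descent set $J$, whereas in $X_\bhz$ it counts elements with right descent set $S\setminus J$; and $X_\bhz$ carries a global sign $(-1)^n$.

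Thus the plan reduces term by term (in the indices $I \subseteq J$) to showing, for every $J \subseteq S$, the equality
\begin{equation*}
\#\{v \in W \mid D_R(v) = J\} \;=\; \#\{x \in W \mid D_R(x) = S \setminus J\}.
\end{equation*}
For this I would invoke the involution $v \mapsto w_\circ v$ on $W$. The standard length identity $\ell(w_\circ v) = \ell(w_\circ) - \ell(v)$ implies, for each $s \in S$, that $\ell(w_\circ v s) < \ell(w_\circ v)$ if and only if $\ell(v s) > \ell(v)$, so $D_R(w_\circ v) = S \setminus D_R(v)$. Since left multiplication by $w_\circ$ is a self-bijection of $W$, it restricts to a bijection between the two sets being compared, settling the identity.

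Combining the two observations yields $X_\sha = (-1)^n X_\bhz$. I do not anticipate any real obstacle here: the substantive combinatorial work has already been carried out in producing the two closed forms above, and the only remaining ingredient is the elementary descent-complementation property of $w_\circ$. The one thing to verify carefully is that the outer sign $(-1)^n$ and the indices on the summations line up exactly as claimed once the two expressions are placed side by side.
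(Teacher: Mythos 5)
Your proposal is correct and follows exactly the paper's argument: both take the closed forms of $X_\sha$ and $X_\bhz$ derived in the preceding subsections and match them via the involution $v \mapsto w_\circ v$, which complements the right descent set. The only difference is that you spell out the length computation behind $D_R(w_\circ v) = S \setminus D_R(v)$, which the paper leaves implicit.
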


\section{A bijection between strict chains in the BHZ poset and the faces of a pulling triangulation of the permutahedron}\label{s:triangle}

As before, $(W,S)$ is a finite Coxeter system. In~\cite[Theorem 1.5]{shard_long}, the author exhibits a bijection between strict $k$-chains in the shard lattice and $(k-1)$-faces (faces of dimension $k-1$) of a pulling triangulation of the $W$-permutahedron. We exhibit here  a bijection between strict $k$-chains in the BHZ poset and $(k-1)$-faces of an isomorphic pulling triangulation of the $W$-permutahedron. 

Recall that the $W$-permutahedron $\Perm(W)$ is the convex hull of the $W$-orbit of a generic point in the space on which $W$ acts as a finite reflection group. The faces of $\Perm(W)$ are naturally indexed by the cosets $W/W_I$ for all $I\subseteq S$: the face $F_{wW_I}$ is of dimension $\#I$ for any $w\in W$ and can be identified with $\Perm(W_I)$;  see for instance~\cite{Ho12}. Moreover, the vertices of a face of $\Perm(W)$ are intervals in the right weak order. The {\em right weak order} $\leq_R$ on $W$ is defined by $u\leq_R v$ if a reduced word for $u\in W$ is a prefix of a reduced word for $v\in W$, i.e., $\ell(u^{-1}v)=\ell(v)-\ell(u)$; see \cite[\S3]{BjBr05}. Then the set of vertices of the face $F_{wW_I}$ is the set  $wW_I=[w^I,w^I w_{\circ,I}]_R$.

For $Q$ a polytope, with a fixed total order on its vertices, the
corresponding \emph{pulling triangulation} of $Q$ is defined as
follows (see \cite{lee}).  Let $v$ be the initial (minimum) vertex.
Inductively, determine the pulling triangulation of each facet which
does not include vertex $v$ (with respect to the total order on its
vertices defined by restriction).  Then, cone each of the simplices
from these triangulations over the vertex $v$.  Remark that it is not
actually necessary to start with a total order on the vertices: it is
sufficient if there is a unique minimum vertex for each face of the
polytope.  We can therefore define $\Delta(W)$ to be the pulling
triangulation of $\Perm(W)$ with respect to right weak order.

\begin{remark} Reading in \cite[Theorem 1.5]{shard_long} gave a bijection from the faces of
  the order complex of the shard lattice to the pulling triangulation
  defined with respect to the reverse of weak order.  These two triangulations
  are equivalent under the linear transformation defined by multiplication
  on the left by $w_{\circ}$.
\end{remark}

We first define our map for strict $k$-chains containing $e$.  To
$e=u_1<_\bhz u_2<_\bhz \dots <_\bhz u_k$ in the BHZ poset, $k\in\mathbb N^*$, we associate the following subset of $W$:
\begin{equation}\label{eq:triangulation}
\varphi(u_1<_\bhz u_2<_\bhz \dots <_\bhz u_k)=\{w_j\mid 1\leq j\leq k\},
\end{equation}
where $w_j=(u_k)^{S(u_j)} \in W^{S(u_j)}$, or equivalently, $w_j=u_k u_j^{-1}$.
Observe that $w_1 = u_k$ is the maximal element of the input chain,
while $w_k=u_k^{S(u_k)}=e$.  

For a strict $k$-chain not containing $e$, we add in $e$, and apply the
above map on strict $k+1$-chains containing $e$, and remove $e$ from the
resulting set.

\begin{theorem}
  \label{thm:triangulation}
  The map $\varphi$ is a dimension-preserving bijection from the order
  complex of the $BHZ$ poset to $\Delta(W)$. 
\end{theorem}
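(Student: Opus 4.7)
The plan is to exhibit an explicit inverse to $\varphi$ and to verify well-definedness by induction on $|S|$, exploiting the recursive structure of the pulling triangulation together with the transitivity of parabolic decomposition.

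First, I analyze the image. For a chain $e=u_1<_\bhz\dots<_\bhz u_k$, the BHZ condition $u_j=(u_k)_{S(u_j)}$ forces the supports $I_j:=S(u_j)$ to form a strict chain $\emptyset=I_1\subsetneq\dots\subsetneq I_k$; hence $w_j=u_ku_j^{-1}$ coincides with $(u_k)^{I_j}$, the minimum-length coset representative of $u_kW_{I_j}$, and the factorisation $u_k=w_ju_j$ is reduced. Consequently $\ell(w_j)=\ell(u_k)-\ell(u_j)$ strictly decreases with $j$, the $w_j$ are pairwise distinct, and $|\varphi(u_1<_\bhz\dots<_\bhz u_k)|=k$, establishing dimension preservation.

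For well-definedness, I would induct on $|S|$. Pick any $s\in I_k\setminus I_{k-1}$: the facet $F=u_kW_{S\setminus\{s\}}$ of $\Perm(W)$ does not contain $e$ (since $s\in S(u_k)$) but does contain $\{w_1,\dots,w_{k-1}\}$ (since $I_{k-1}\subseteq S\setminus\{s\}$). Translating $F$ onto $\Perm(W_{S\setminus\{s\}})$ by its minimum vertex $u_k^{S\setminus\{s\}}$, the subset $\{w_j\}_{j<k}$ becomes $\{y^{I_j}\}_{j<k}$ with $y:=(u_k)_{S\setminus\{s\}}$, and transitivity of parabolic decomposition gives $y_{I_j}=(u_k)_{I_j}=u_j$. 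This identifies the translated set as the $\varphi$-image in $W_{S\setminus\{s\}}$ of either $(e,u_2,\dots,u_{k-1})$ when $y=u_{k-1}$, or of $(u_2,\dots,u_{k-1},y)$ when $y\neq u_{k-1}$ (in which case $u_{k-1}<_\bhz y$ follows from the identity $y_{I_{k-1}}=u_{k-1}$). The inductive hypothesis places the translated set in $\Delta(W_{S\setminus\{s\}})$, so coning with $\{e\}$ puts $\sigma$ in $\Delta(W)$.

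For bijectivity, injectivity is immediate from recovery: $u_k$ is the unique element of maximum length in $\sigma$ (lengths being strictly decreasing), and $u_j=w_j^{-1}u_k$ reconstructs the chain after sorting by $\ell(u_j)$. Surjectivity mirrors well-definedness: given $\sigma\in\Delta(W)$ containing $e$, the recursive definition places $\sigma\setminus\{e\}$ in $\Delta(F)$ for some facet $F=xW_{S\setminus\{s\}}$ not containing $e$; the inductive hypothesis furnishes a BHZ chain in $W_{S\setminus\{s\}}$ with image equal to the translated $\sigma\setminus\{e\}$, and pulling back via $u_k:=x^{S\setminus\{s\}}\cdot v_{\mathrm{top}}$ (with $v_{\mathrm{top}}$ the top of the sub-chain) reconstructs the BHZ chain in $W$ whose $\varphi$-image is $\sigma$. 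Simplices not containing $e$ are handled by adjoining $e$, applying the argument to $\sigma\cup\{e\}$, and stripping $e$. The main obstacle is the case distinction $y=u_{k-1}$ versus $y\neq u_{k-1}$ in the induction (the latter occurring when $|I_k\setminus I_{k-1}|>1$): in the second case the reduced sub-chain in $W_{S\setminus\{s\}}$ does not contain $e$ and replaces $u_{k-1}$ by $y=(u_k)_{S\setminus\{s\}}$ at its top, and verifying that this remains a BHZ chain relies crucially on the nested parabolic identity $y_{I_{k-1}}=u_{k-1}$.
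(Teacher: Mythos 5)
Your argument is correct and shares the paper's overall strategy (injectivity by reconstructing the chain from its image, using that the lengths $\ell(w_j)=\ell(u_k)-\ell(u_j)$ are strictly decreasing; well-definedness and surjectivity by induction on the rank, translating by a minimal coset representative and coning over $e$), but the inductive reduction is genuinely different. The paper strips the \emph{top element of the chain} and descends to $W_{S(u_{k-1})}$: the image of the truncated chain $u_1<_\bhz\dots<_\bhz u_{k-1}$, translated by $w_{k-1}=u_ku_{k-1}^{-1}$, is exactly $\{w_1,\dots,w_{k-1}\}$, so no new elements appear and no case distinction arises --- at the mild cost that $F_{w_{k-1}W_{S(u_{k-1})}}$ need not be a facet, so one must know that the pulling triangulation restricts to arbitrary faces (Lemma~\ref{lem:pull}(1)). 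You instead strip a \emph{single generator} $s\in S(u_k)\setminus S(u_{k-1})$ and descend to the maximal parabolic $W_{S\setminus\{s\}}$, which keeps you inside an honest facet and matches the literal recursive definition of the pulling triangulation, but forces the auxiliary element $y=(u_k)_{S\setminus\{s\}}$ into the reduced chain and the attendant dichotomy $y=u_{k-1}$ versus $y\neq u_{k-1}$; your justification that $u_2<_\bhz\dots<_\bhz u_{k-1}<_\bhz y$ is again a BHZ chain, via the nested identity $y_{S(u_{k-1})}=(u_k)_{S(u_{k-1})}=u_{k-1}$, is the right one. Two minor points, neither affecting validity: the dichotomy is governed by whether $y=u_{k-1}$, not by whether $\#(S(u_k)\setminus S(u_{k-1}))>1$ (since $(u_k)_{S\setminus\{s\}}=(u_k)_{S(u_k)\setminus\{s\}}$, the case $y\neq u_{k-1}$ can only occur when that cardinality exceeds $1$, but need not occur then); and the surjectivity step, which you only sketch, requires spelling out the same dichotomy --- when the sub-chain supplied by induction in $W_{S\setminus\{s\}}$ does not contain $e$, its top element $v_{\mathrm{top}}$ must be dropped and replaced by $x^{S\setminus\{s\}}v_{\mathrm{top}}$ while $e$ is prepended, and one checks as you indicate that the new top relation is a strict BHZ covering because $x^{S\setminus\{s\}}\neq e$.
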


Notice that the above theorem, together with~\cite[Theorem 1.5]{shard_long},
proves Proposition \ref{weak-main}.  As we showed in Section \ref{s:chains},
Theorem~\ref{main} follows.  This provides a second (not self-contained)
proof of the main theorem.  

Beware that the bijection $\varphi$ is however not an isomorphism of
simplicial complexes, as it does not preserve the incidence relation of
simplices.

By its very construction, $\varphi$ is given essentially by applying the
same map twice, one time mapping chains containing $e$ to simplices
containing $e$, and another time mapping chains not containing $e$ to
simplices not containing $e$. Adding or removing $e$ is a bijective
process that allows one to go from one case to the other. Therefore
proving the bijectivity of $\varphi$ can be done by looking only at
the case where $e$ is present.

Before proving the theorem, we give some properties of the map $\varphi$.

\begin{lemma}
  If $(e=u_1<_\bhz \dots <_\bhz u_k)$ is a $k$-chain in the BHZ poset starting
  with $e$, and
  $w_i=u_k u_i^{-1}$, then $w_1>_R w_2 >_R \dots >_R w_k$.
\end{lemma}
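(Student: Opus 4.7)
The plan is to unwind the definition of $\leq_\bhz$ to convert both $w_i$ and the quotient $w_{i+1}^{-1}w_i$ into (left-)parabolic factors with known lengths. The key observation is that the BHZ relation $x \leq_\bhz y$ is by definition $y_{S(x)} = x$, which means that $y = y^{S(x)}\cdot x$ is the parabolic factorisation of $y$ with respect to $S(x)$, and in particular it is a \emph{reduced} expression: $\ell(y) = \ell(y^{S(x)}) + \ell(x)$.

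First, I would apply this observation to the pair $u_i \leq_\bhz u_k$. This gives $u_k = u_k^{S(u_i)}\cdot u_i$ with $\ell(u_k) = \ell(u_k^{S(u_i)}) + \ell(u_i)$, hence
\begin{equation*}
  w_i = u_k u_i^{-1} = u_k^{S(u_i)} \qquad\text{and}\qquad \ell(w_i) = \ell(u_k) - \ell(u_i).
\end{equation*}
The same identity applied to $u_{i+1} \leq_\bhz u_k$ gives $\ell(w_{i+1}) = \ell(u_k) - \ell(u_{i+1})$. Subtracting,
\begin{equation*}
  \ell(w_i) - \ell(w_{i+1}) = \ell(u_{i+1}) - \ell(u_i).
\end{equation*}

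Next, I would compute the quotient $w_{i+1}^{-1} w_i = u_{i+1}u_k^{-1}u_k u_i^{-1} = u_{i+1}u_i^{-1}$ and apply the same observation to the pair $u_i \leq_\bhz u_{i+1}$ to get $u_{i+1} = u_{i+1}^{S(u_i)}\cdot u_i$ with $\ell(u_{i+1}) = \ell(u_{i+1}^{S(u_i)}) + \ell(u_i)$. Thus $w_{i+1}^{-1}w_i = u_{i+1}^{S(u_i)}$ and
\begin{equation*}
  \ell(w_{i+1}^{-1}w_i) = \ell(u_{i+1}) - \ell(u_i) = \ell(w_i) - \ell(w_{i+1}).
\end{equation*}
By the characterisation of the right weak order recalled in Section~\ref{s:triangle}, this is precisely $w_{i+1} \leq_R w_i$. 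The inequality is strict because the chain is strict: $u_i <_\bhz u_{i+1}$ forces $u_{i+1}^{S(u_i)} \neq e$, hence $\ell(w_{i+1}^{-1}w_i) > 0$, so $w_{i+1} \neq w_i$.

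The argument is essentially a direct calculation, so there is no real obstacle beyond spotting that the BHZ condition is \emph{exactly} the statement that the relevant parabolic factorisation is reduced, which is applied twice (once to $(u_i,u_k)$ and once to $(u_i,u_{i+1})$) and then combined. The only mild care needed is in handling indices and in recording that both $\ell(w_i)$ and $\ell(w_{i+1}^{-1}w_i)$ have the same description $\ell(u_k)-\ell(u_i)$ in some form, which is what makes the lengths in the right-weak-order condition line up on the nose.
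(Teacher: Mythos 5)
Your proof is correct and follows essentially the same route as the paper's: both arguments come down to the fact that $x\leq_\bhz y$ means $y=y^{S(x)}x$ is a reduced parabolic factorisation, so the relevant lengths are additive and the right-weak-order condition $\ell(w_{i+1}^{-1}w_i)=\ell(w_i)-\ell(w_{i+1})$ falls out. The only difference is that the paper outsources part of the bookkeeping to the cited fact that $u\mapsto u^{-1}$ sends BHZ chains to right-weak-order chains (\cite[Proposition 6(3)]{bhz}), whereas you carry out the length computation directly from the definition, making your version self-contained.
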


\begin{proof} 
  By \cite[Proposition 6(3)]{bhz}, since
  $u_1 <_\bhz \dots <_\bhz u_k$, we have that $u_1^{-1}<_R \dots <_R u_k^{-1}$.
  Since $u_k=w_i u_i$ is a reduced factorization for each $i$, it follows
  that $w_1>_R \dots >_R w_k$.  
\end{proof}

Note that this lemma implies that, although the image of $\varphi$ is, by
definition, just a set of elements in $W$, in fact, we can determine
the numbering of the elements by looking at their relative order with
respect to $<_R$. 

\begin{lemma}\label{lem:1}
The map $\varphi$ is injective.
\end{lemma}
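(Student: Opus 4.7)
The plan is to show that from the image set $\varphi(u_1 <_\bhz \cdots <_\bhz u_k)$ we can reconstruct the chain uniquely. First I would reduce to the case of chains containing $e$: a chain not containing $e$ is handled by adjoining $e$ at the bottom, applying $\varphi$, and removing $e$ from the output. Since a chain $(u_1,\dots,u_k)$ containing $e$ produces the element $w_k = u_k u_k^{-1} = e$ in its image, while a chain not containing $e$ produces by definition an image not containing $e$, the two types of chains cannot collide, and the adjunction/deletion of $e$ is a bijection between the two input strata. Thus it suffices to prove injectivity on strict chains starting with $u_1 = e$.

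Next I would use the preceding lemma to equip the image with intrinsic structure. Assume $\varphi(e = u_1 <_\bhz \cdots <_\bhz u_k) = \varphi(e = u'_1 <_\bhz \cdots <_\bhz u'_k)$ as subsets of $W$. By the lemma, the elements $w_j = u_k u_j^{-1}$ form a strictly decreasing chain in the right weak order, and similarly for $w'_j = u'_k(u'_j)^{-1}$. Since both sequences are the $<_R$-decreasing rearrangement of the same set, we must have $w_j = w'_j$ for every $j = 1,\dots,k$. (In particular the two chains must have the same length $k$, since $\varphi$ preserves cardinality.)

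Finally I would recover the $u_j$'s from the $w_j$'s. Since $w_1 = u_k = u'_k$ (the $<_R$-maximum of the image set), and $u_k = w_j u_j$ is a reduced factorisation, we obtain
\begin{equation*}
u_j = w_j^{-1} w_1 = (w'_j)^{-1} w'_1 = u'_j
\end{equation*}
for each $j$, which shows the two chains coincide. The argument is essentially formal once the previous lemma is in hand; the only non-routine point, and the closest thing to an obstacle, is ensuring that recovering the ordering of the $w_j$'s from their set is indeed canonical, which is precisely what the previous lemma guarantees.
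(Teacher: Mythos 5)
Your proof is correct and follows essentially the same route as the paper: reduce to chains containing $e$, use the preceding lemma to recover the indexing of the $w_j$ from the set via the right weak order, and then reconstruct $u_j = w_j^{-1}u_k$ with $u_k = w_1$. The extra care you take in separating the two strata (images containing $e$ versus not) is a detail the paper leaves implicit, but the argument is the same.
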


\begin{proof} 
  It suffices to consider $\varphi$ applied to strict $k$-chains containing
  $e$.  Suppose $u_1=e$ and $\varphi(u_1<_\bhz \dots <_\bhz u_k)=\{w_i \mid 1\leq i \leq k\}$.  As discussed above, from $\{w_i\mid 1\leq i \leq k\}$, we can reconstruct the numbering of
  the elements.
  In particular, this allows us to determine $u_k=w_1$, and
  then we can reconstruct $u_i=w_i^{-1}u_k$.  Since we can reconstruct
  the $u_i$ on the basis of their image under $\varphi$, it must be that
  $\varphi$ is injective.  
\end{proof}

\begin{lemma}\label{lem:2} Let $e=u_1<_\bhz u_2 <_\bhz \dots <_\bhz u_k$ be a strict $k$-chain
  in the BHZ poset, and $\varphi(u_1<_\bhz u_2<_\bhz \dots <_\bhz u_k)=\{w_j\mid 1\leq j\leq k\}$ as in \eqref{eq:triangulation}.
  Then:
\begin{enumerate}
\item For $i\leq k-1$, we have $w_i\in w_{k-1}W_{S(u_{k-1})}$.
\item $\varphi(u_1<_\bhz u_2<_\bhz \dots <_\bhz u_k)\setminus\{e\}$ is a subset of the vertices of the face $F_{w_{k-1}W_{S(u_{k-1})}}$ of $\Perm(W)$ for which $w_{k-1}$ is the smallest vertex.
\end{enumerate}
\end{lemma}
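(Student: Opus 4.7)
The plan is to verify both parts by direct computation using the definition $w_j = u_k u_j^{-1}$ together with the standard description of faces of $\Perm(W)$.

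For part (1), I would compute the coset representative $w_{k-1}^{-1} w_i$ explicitly:
\begin{equation*}
w_{k-1}^{-1} w_i = (u_k u_{k-1}^{-1})^{-1}(u_k u_i^{-1}) = u_{k-1} u_i^{-1}.
\end{equation*}
The key observation is that, since $u_i \leq_\bhz u_{k-1}$ implies $S(u_i) \subseteq S(u_{k-1})$ (this follows from $(u_{k-1})_{S(u_i)} = u_i$ in the definition of the BHZ order), both $u_{k-1}$ and $u_i$ lie in the parabolic subgroup $W_{S(u_{k-1})}$. Hence $u_{k-1} u_i^{-1} \in W_{S(u_{k-1})}$, which is exactly the statement $w_i \in w_{k-1} W_{S(u_{k-1})}$.

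For part (2), I would invoke the description recalled in Section~\ref{s:triangle}: the set of vertices of the face $F_{wW_I}$ of $\Perm(W)$ is precisely the coset $wW_I = [w^I, w^I w_{\circ,I}]_R$, whose minimum in right weak order is $w^I$. Part (1) shows that $\{w_1, \dots, w_{k-1}\}$ is contained in the coset $w_{k-1} W_{S(u_{k-1})}$, hence in the vertex set of $F_{w_{k-1}W_{S(u_{k-1})}}$. Noting that the strict chain condition forces the $w_i$ to be pairwise distinct and $w_k = e$ is the only one equal to the identity, we conclude that $\varphi(u_1 <_\bhz \dots <_\bhz u_k) \setminus \{e\} = \{w_1, \dots, w_{k-1}\}$ lies in this face.

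The final point — that $w_{k-1}$ is the smallest vertex of the face — reduces to showing $w_{k-1}$ is the minimal length representative of its coset, i.e., $w_{k-1} \in W^{S(u_{k-1})}$. But this is essentially immediate from the definition in \eqref{eq:triangulation}: we set $w_{k-1} = (u_k)^{S(u_{k-1})}$, which belongs to $W^{S(u_{k-1})}$ by the defining property of the parabolic decomposition. I do not anticipate any real obstacle in this argument; it is a matter of carefully unpacking the definitions and recalling the well-known parametrization of faces of the permutahedron.
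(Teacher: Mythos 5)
Your proof is correct and follows essentially the same route as the paper: both reduce part (1) to the identity $w_{k-1}^{-1}w_i = u_{k-1}u_i^{-1}$ and the observation that this element lies in $W_{S(u_{k-1})}$, and both deduce part (2) from the coset description of the faces of $\Perm(W)$ together with $w_{k-1}=(u_k)^{S(u_{k-1})}\in W^{S(u_{k-1})}$. (The paper additionally invokes the length additivity $\ell(w_i)=\ell(w_{k-1})+\ell(u_{k-1}u_i^{-1})$ to pin down the parabolic components of $w_i$, but as you note this is not needed for mere coset membership.)
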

  
\begin{proof} Let $i\leq k-1$.  Observe that $w_{i}=w_{k-1}u_{k-1}u_i^{-1}$.
  Since $w_i \geq_R w_{k-1}$, we have that
  $\ell(w_i)=\ell(w_{k-1})+\ell(u_{k-1}u_i^{-1})$.
  Since $w_{k-1}\in W^{S(u_{k-1})}$, while $u_{k-1}u_i^{-1}\in W_{S(u_{k-1})}$, it
    follows that the parabolic factorization of $w_{i}$ with respect to the
    parabolic subgroup $W_{S(u_{k-1})}$ is $(w_{k-1},u_{k-1}u_i^{-1})$.
  This establishes the first point.
  The second point is  just a rephrasing of the first point using the description of the faces of $\Perm(W)$ given at the beginning of this section. 
\end{proof}

We also need the following standard facts on pulling triangulations.  

\begin{lemma}\label{lem:pull}
Let $Q$ be a convex polytope with a fixed order on its vertices.  
\begin{enumerate}
\item
  The pulling triangulation of $Q$ restricts to the pulling
  triangulation of each face of $Q$.
\item
  Each face of the pulling triangulation is contained in a smallest
  face of $Q$, and contains the minimal vertex of that face (with respect
  to the order on the vertices).
\end{enumerate}
\end{lemma}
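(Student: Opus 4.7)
The plan is to prove both statements simultaneously by induction on $d := \dim Q$, the base case $d \leq 0$ being trivial. For the inductive step, let $v$ be the minimum vertex of $Q$ and assume both parts in dimension $< d$. For each simplex $\sigma$ of the pulling triangulation of $Q$, write $F(\sigma)$ for the smallest face of $Q$ containing $\sigma$; its existence as an intersection of faces of $Q$ is automatic, so the first clause of part (2) needs no argument and only the vertex-minimality statement requires work.

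For part (2), I split on whether $v \in \sigma$. If $v \in \sigma$, then $v \in F(\sigma)$, and since $v$ is the global minimum of the vertex order it is a fortiori the minimum vertex of $F(\sigma)$. If $v \notin \sigma$, the recursive definition places $\sigma$ in the pulling triangulation of some facet $G$ of $Q$ with $v \notin G$; then $F(\sigma) \subseteq G$ because $G$ is a face of $Q$ containing $\sigma$, so $F(\sigma)$ is also the smallest face of $G$ containing $\sigma$, and the inductive hypothesis applied to $G$ produces the desired minimum vertex.

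For part (1), given a face $F$ of $Q$, I split on whether $v \in F$. When $v \notin F$, since $F$ equals the intersection of the facets of $Q$ containing it, at least one such facet $G_0$ must avoid $v$; inductively, the pulling triangulation of $G_0$ restricts to that of $F$. Any simplex of the pulling triangulation of $Q$ contained in $F$ avoids $v$ and hence lies in the pulling triangulation of some facet $G$ not containing $v$; applying the inductive restriction property first to $G$ (placing the simplex inside the pulling triangulation of $F \cap G$) and then to $G_0$ (since $F \cap G$ is a face of $G_0$) lands it in the pulling triangulation of $F$. When $v \in F$, I partition the simplices of the pulling triangulation of $Q$ contained in $F$ according to whether they contain $v$; using the inductive hypothesis on each facet $G$ of $Q$ with $v \notin G$, these are cones from $v$ over, or are themselves equal to, simplices in the pulling triangulations of the intersections $F \cap G$. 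The task then reduces to identifying this collection with the union of the pulling triangulations of the facets of $F$ not containing $v$, using the standard polytopal fact that such facets are exactly the $F \cap G$ that have codimension one in $F$. The main obstacle is this last bookkeeping step: some intersections $F \cap G$ may fail to be facets of $F$, forcing one further appeal to the inductive restriction property to absorb them into the pulling triangulations of the true facets. Once this matching is in hand, comparison with the recursive construction of the pulling triangulation of $F$ yields the conclusion.
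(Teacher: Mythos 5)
Your proposal is correct, and it is actually more detailed than the paper's own argument, which dispatches part (1) with the single sentence ``established by induction on the dimension of $Q$.'' The interesting structural difference is in part (2). The paper first proves (1), then derives (2) from it: the smallest containing face $R$ exists because the faces of $Q$ form a lattice; by (1) the simplex lies in the pulling triangulation of $R$; and by the recursive definition any simplex of that triangulation \emph{not} containing the minimal vertex of $R$ sits inside a facet of $R$, contradicting the minimality of $R$. You instead prove (2) by an independent induction that never invokes (1): split on whether the cone point $v$ lies in $\sigma$, and if not, push $\sigma$ down into a facet $G$ avoiding $v$, observe that the smallest face of $Q$ containing $\sigma$ coincides with the smallest face of $G$ containing it, and recurse. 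Both arguments are valid; the paper's is shorter once (1) is granted, while yours makes (2) logically self-contained and isolates all the real work in (1). For (1) your case analysis (on whether $v\in F$) is sound; the ``bookkeeping'' you flag in the $v\in F$ case is genuine but resolves exactly as you indicate, since any face of $F$ (or of $Q$) avoiding $v$ is contained in a facet avoiding $v$ --- otherwise $v$ would lie in the intersection of all facets containing that face --- and the inductive restriction property then absorbs the intersections $F\cap G$ into the triangulations of the true facets of $F$.
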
  

\begin{proof} The first point is established by induction on the dimension
  of $Q$.  Every face of the pulling triangulation is contained in a
  smallest face $R$ of $Q$ because the faces of $Q$ form a lattice under the
  inclusion order.  By the first point, the pulling triangulation of $Q$
  restricts to the pulling triangulation of $R$.  By the definition of the
  pulling triangulation of $R$, the faces of it which do not include the
  minimal vertex of $R$ are each contained in a facet of $R$, and therefore do not
  have $R$ as the smallest face in which they are contained.  
\end{proof}

\begin{proof}[Proof of Theorem~\ref{thm:triangulation}] The map $\varphi$ is injective by Lemma~\ref{lem:1}. 

  We now show that the image of a strict $k$-chain $u_1<_\bhz \dots <_\bhz u_k$
  is a $k$-face of $\Delta(W)$.  
  The proof is by induction on $n$.  The base case is trivial.  Also,
  it suffices to consider the case that $u_1=e$, and $k\geq 2$.

  For $1\leq i \leq k-1$, define $w_i'=u_{k-1}u_i^{-1}$.  
  Now $\varphi(u_1<_\bhz \dots <_\bhz u_{k-1}) = \{w_i'\mid 1\leq i\leq k-1\}$.
  The chain $u_1<_\bhz \dots <_\bhz u_{k-1}$ lies in $W_{S(u_{k-1})}$ whose rank
  is less than $n$, so by induction,
  $\{w_i' \mid 1 \leq i \leq k-1\}$
  is a face of the pulling triangulation of $\Perm(W_{S(u_{k-1})})$.

  For $1 \leq i \leq k$, define $w_i=u_k u_i^{-1}$. Note that for
  $1\leq i \leq k-1$, we have $w_i=u_k u_{k-1}^{-1}w'_i$, which can be rewritten
  as $w_i=w_{k-1}w_i'$.  Left-multiplication by $w_{k-1}$ is an order-preserving
  bijection from $W_{S(u_{k-1})}$ to $w_{k-1}W_{S(u_{k-1})}$.  This map takes the
  pulling triangulation of $\Perm(W_{S(u_{k-1})})$ to the pulling triangulation
  of the face $F_{w_{k-1}W_{S(u_{k-1})}}$ of $\Perm(W)$.  

  Now,  
  \begin{equation*}
    \varphi(u_1<_\bhz \dots <_\bhz u_k) = \{w_i\mid 1\leq i \leq k\}=\{e\} \cup w_{k-1}\{w_i' \mid 1\leq i \leq k-1\}.
  \end{equation*}
  This set forms the vertices of a simplex
  with one vertex at $e$, and the others forming a face of
  the pulling triangulation of $F_{w_{k-1}W_{S(u_{k-1})}}$.  Since $w_{k-1}>_R
  w_k=e$, the face $F_{w_{k-1}W_{S(u_{k-1})}}$ of $\Perm(W)$ does not contain $e$.  Thus,
  $\{w_i\mid 1 \leq i \leq k\}$ is a $k$-face of the pulling triangulation of $\Perm(W)$.  

  Conversely, suppose we have a $k$-face $H$ in $\Delta(W)$.
We may assume
  $e\in H$.  Write $G=H\setminus \{e\}$.  
$G$ is  also a face of $\Delta(W)$.  By
Lemma \ref{lem:pull}(2), $G$ lies in a well-defined smallest face of
$\Perm(W)$.  That face can be uniquely written as $wW_J$ with $J\subset S$
and $w\in W^J$.  By Lemma \ref{lem:pull}(2), $G$ contains the smallest vertex
of this face, which is $w$.
Now $w^{-1}G$ defines a $(k-1)$-face of $\Delta(W_J)$ containing $e$.  By
induction, it is the image under $\varphi$ of a chain
$e=u_1<_\bhz \dots <_\bhz u_{k-1}$.  Since $w^{-1}G$ is not contained in $W_I$ for any $I\subset J$, supp $u_{k-1}=J$.

Define $u_k=w u_{k-1}$.  Observe that $(u_k)_{S(u_{k-1})}=(u_k)_J=u_{k-1}$.  Thus $u_{k-1}<_\bhz u_k$.

We therefore have constructed a chain $e=u_1 <_\bhz \dots <_\bhz u_k$ in the
BHZ poset.  
We now claim that $\varphi(u_1 <_\bhz \dots <_\bhz u_k)=H$.  The necessary
calculation follows
exactly the same logic as the proof that the image of a $k$-chain is a
$k$-face of $\Delta(W)$.  
\end{proof}

\bibliographystyle{alpha}
\bibliography{shard_bhz}

{Pierre Baumann} \\
{Institut de Recherche Math\'ematique Avanc\'ee, CNRS UMR 7501, Universit\'e de Strasbourg, F-67084 Strasbourg Cedex, France} \\
{p.baumann@unistra.fr}

{Fr\'ed\'eric Chapoton} \\
{Institut de Recherche Math\'ematique Avanc\'ee, CNRS UMR 7501, Universit\'e de Strasbourg, F-67084 Strasbourg Cedex, France} \\
{chapoton@unistra.fr}

{Christophe Hohlweg} \\
{LaCIM et D\'epartement de Math\'ematiques, Universit\'e du Qu\'ebec \`a Montr\'eal, Montr\'eal, Qu\'ebec, Canada} \\
{hohlweg.christophe@uqam.ca}

{Hugh Thomas} \\
{LaCIM et D\'epartement de Math\'ematiques, Universit\'e du Qu\'ebec \`a Montr\'eal, Montr\'eal, Qu\'ebec, Canada} \\
{hugh.ross.thomas@gmail.com}

\end{document}